\newcommand{\amscite}[3]{\cite[#3]{#1}}
 \def\ds{\displaystyle}
\def\ds{\displaystyle}
\def\1{1\!\mathrm l}
\newtheorem{To}{Theorem}[section]
 \newtheorem{lem}[To]{Lemma}
 \theoremstyle{definition}
 \newtheorem{de}[To]{Definition}
\theoremstyle{remark}
\newtheorem{rem}[To]{Remark}
\begin{document}

\title{Numerically stable conditions on rational and essential singularities}

\author{Amerah Alameer}
\maketitle

\begin{abstract}

 This paper demonstrates some connections between the coefficients  of a
Taylor series $f(z)=\ds\sum_{n=0}^\infty a_n z^n$ and singularities of the function. There are many known
results of this type, for example, counting the number of poles on the circle of convergence,
and doing convergence or overconvergence for $f$ on any arc of holomorphy. A new approach proposed here is that these kinds of results are extended by relaxing
the classical conditions for singularities and convergence theorems. This is done by allowing the
coefficients to be sufficiently  small instead of being zero.
\end{abstract}

\section{Introduction}
It is a classical question to analyse the boundary behaviour of an
analytic function in terms of its Taylor expansion.
 For example, for a function $f$ which is analytic on $\Bbb{E}=\{z\in\Bbb{C} : |z|<1\}$, either $f$ can be extended beyond the circle of convergence $\partial\Bbb{E}$, or $\partial\Bbb{E}$ is the natural boundary (i.e., every $z\in\partial\Bbb{E}$ is a singular point of $f$).

Many of the results that have been obtained
so far (see discussion below) require to vanish some of the Taylor coefficients. Although these results are interesting in
the theoretical sense, they are not really applicable for practical
problems because, in applied science, values may not be
strictly zero. For example, in numerical methods there is no test
that can confirm that such a coefficient is exactly zero: one may
only conclude that it is sufficiently close to zero.

  In this paper, we will 
 address the following two questions for a Taylor series $f(z)=\ds\sum_{n=0}^\infty a_n z^n$ that does not necessarily have gaps.
\begin{enumerate}[label=(\roman*)]
  \item   How many poles of $f$ are on the circle of convergence?
	\item  If $f$ is defined analytically on $\Bbb{E}$, when is  $\partial\Bbb{E}$ the natural boundary for $f$? 
\end{enumerate}

 We now discuss some classical results along with our generalizations, and the first example is the following.
\begin{To}{}\amscite{MR580154}*{\S~5.3, Prob.244}.\label{7}
Let $v_{n}$ be the number of non-zero coefficients among the $n$
coefficients $a_{0},a_{1},...,a_{n-1}$. If there are only poles (and
no other singularities) on the circle of convergence of the power
series: $\ds\sum_{n=0}^{\infty}a_n z^n$, the number
of such poles is not smaller than $\ds\limsup_{n\rightarrow\infty}\frac{n}{v_{n}}$.
\end{To}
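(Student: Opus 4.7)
The plan is to multiply $f$ by a polynomial that cancels its poles, record the resulting principal-part expansion of the Taylor coefficients, and then translate a hypothetical violation of the inequality into a linear system that forces the principal parts of $f$ at each pole to vanish. Throughout, let $z_1,\ldots,z_p$ be the poles on $|z|=R$ with respective multiplicities $m_1,\ldots,m_p$, and write $N = m_1+\cdots+m_p$ for the total pole count counted with multiplicity.

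The key analytic input is the principal-part expansion
\[
a_n \;=\; \sum_{j=1}^{p} \tilde Q_j(n)\, z_j^{-n} \;+\; e_n,
\]
in which each $\tilde Q_j$ is a polynomial of degree $m_j-1$ whose leading coefficient is non-zero (since $z_j$ is a genuine pole), and $|e_n| \le C\sigma^n$ for some $\sigma<1/R$ because the remainder extends analytically to a disk of radius strictly greater than $R$. I would then assume for contradiction that $\limsup_{n\to\infty} n/v_n > N$, and enumerate the non-zero indices of $(a_n)$ as $n_1 < n_2 < \cdots$, so that the assumption becomes $n_k > (N+\varepsilon)k$ for some $\varepsilon>0$ and infinitely many $k$. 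A direct count of zeros in $[0,n_k]$ (if every run of consecutive zeros had length $\le N-1$, the total number of zeros would be at most $(N-1)k$, forcing $n_k \le Nk-1$) guarantees that each such $k$ produces a run of at least $N$ consecutive zero coefficients in $[0,n_k]$. A second counting step (if all such runs were confined to a bounded initial segment $[0,K]$, then $n_k \le Nk + O_K(1)$, contradicting $n_k > (N+\varepsilon)k$ for large $k$) further ensures that the positions of these runs tend to $+\infty$, yielding a sequence $n^{(\ell)}\to\infty$ with
\[
a_{n^{(\ell)}} = a_{n^{(\ell)}-1} = \cdots = a_{n^{(\ell)}-N+1} = 0.
\]

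For each such $n = n^{(\ell)}$, substituting the principal-part expansion into these $N$ equations yields the linear system
\[
\sum_{j=1}^{p} \tilde Q_j(n-i)\, z_j^{-(n-i)} \;=\; -e_{n-i}, \qquad i=0,1,\ldots,N-1,
\]
regarded as a system in the $N$ coefficients of the polynomials $\tilde Q_j$. The matrix is a confluent Vandermonde in the $z_j^{-1}$ with confluences $m_j$; after factoring out $\mathrm{diag}(z_j^{-n})$ it reduces to a non-singular matrix whose determinant has absolute value independent of $n$. Applying Cramer's rule together with $|e_{n-i}| \le C\sigma^n$ produces, for every coefficient of every $\tilde Q_j$, a bound of the form $|\mathrm{coeff}| \le C' n^{N-1} (\sigma R)^n$. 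Since $\sigma R < 1$, letting $n = n^{(\ell)}\to\infty$ forces each such coefficient to vanish, so each $\tilde Q_j \equiv 0$. This contradicts the non-vanishing of the leading coefficients of the $\tilde Q_j$ (since each $z_j$ is a genuine pole), and establishes $\limsup n/v_n \le N$.

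I expect the main technical obstacle to arise in the Cramer analysis of higher-order poles, where the confluent Vandermonde entries contain polynomial factors $(n-i)^l$ of size up to $n^{N-1}$; these must be verified not to spoil the geometric decay $(\sigma R)^n$, which they do not, because polynomial growth is swallowed by an exponentially decaying factor. A secondary subtlety is the combinatorial claim that the runs of $\ge N$ consecutive zeros occur at arbitrarily large positions; without the second counting step the runs could in principle all cluster in a bounded initial segment, which would preclude the $n \to \infty$ limit in the final step.
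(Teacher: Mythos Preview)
Your argument is correct and complete. The paper does not give its own proof of this classical statement (it is cited from P\'olya--Szeg\H{o}); the closest thing is the proof of the generalisation, Theorem~\ref{9}, whose method specialises to the present result but by a rather different route. There one splits $a_n=\alpha_n+b_n$ with $\sum\alpha_n z^n$ the rational principal part and $\sum b_n z^n$ of strictly larger radius, and then invokes Lemma~\ref{13} to guarantee that in every window of $k$ consecutive indices some $|\alpha_{\bar n}|$ exceeds $(1/\rho-\epsilon)^{\bar n}>|b_{\bar n}|$, so that $a_{\bar n}\neq 0$; this yields $v_n\ge (n-N)/k$ directly, with no contradiction step. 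Your approach instead argues by contradiction, extracts length-$N$ zero-runs at positions tending to infinity, and inverts a confluent Vandermonde system to force every $\tilde Q_j\equiv 0$. The advantage of your route is that it is self-contained (no black-box appeal to Lemma~\ref{13}) and makes the role of the pole multiplicities explicit through the linear algebra. The advantage of the paper's route is brevity and, more to the point of the paper, that it is already in the right shape for the numerically stable version Theorem~\ref{9}: one simply replaces ``$a_{\bar n}\neq 0$'' by ``$|a_{\bar n}|>(1/\rho-\epsilon)^{\bar n}$''. Your contradiction argument relies on the exact vanishing $a_{n-i}=0$ to obtain the right-hand side $-e_{n-i}$, so adapting it to a threshold count would require an extra perturbation estimate in the Cramer step.
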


Our first object is to replace the number 
$v_n$ in this theorem by the number of coefficients that shall exceed some small values.  This will constitute  Theorem \mbox{\ref{9}}, which has the same conclusion as in Theorem \mbox{\ref{7}} but in a more general setting.

We next consider the classical theorem of Ostrowski convergence for lacunary series. The definition of lacunary series used in the initial theorem is as follows.
 \begin{de} \amscite{remmert2013classical}*{\S~11.1}.\label{23}
An infinite power series $\ds\sum_{v=0}^\infty a_{v}z^{v}$ is called a \emph{lacunary} series if there
exists an increasing sequence $(m_{v})$ of non-negative integers for $v=0,1,2,\dots$  such that $\ds\lim_{v\rightarrow\infty}(m_{v+1}-m_{v})=\infty$,
and 
 \[
a_{j}=0,\quad m_{v}<j<m_{v+1},\, v=0,1,2,\dots.
\]
\end{de} 
 Ostrowski proved the following important property for lacunary series.
\begin{To}[Ostrowski's convergence]\label{11} \amscite{remmert2013classical}*{\S~11.1, Thm 3}.
 If $f(z)=\ds\sum_{n=0}^\infty a_{m_{n}}z^{m_{n}}$ is a lacunary series with
a bounded sequence of coefficients, its sequence of partial sums $s_{m_{n}}$
converges uniformly on every arc of holomorphy $L$ of $f.$ 
\end{To}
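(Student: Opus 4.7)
The plan is to carry out the classical Ostrowski overconvergence strategy. First, I would normalise so that the radius of convergence equals $1$, fix a compact sub-arc $L'\subset L$, and let $G$ denote an open neighbourhood of $\Bbb{E}\cup L$ on which $f$ admits an analytic extension (still denoted by $f$). It then suffices to prove that $s_{m_{n}}\to f$ uniformly on $L'$.

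The key device is a polynomial substitution $z=\pi(w)$, where $\pi$ is chosen so that $\pi$ maps the closed unit disc $\{|w|\le 1\}$ into $G$, the image $\pi(\{|w|\le 1\})$ covers $L'$, and the degree of $\pi$ is small compared with the gaps $m_{n+1}-m_{n}$. A convenient normalised form is
\[
\pi(w)=c\,(w^{p}+w^{p+1}),
\]
which, with suitable rotation and scaling of the target, can be arranged to satisfy these geometric conditions near any prescribed point of $L$. Setting $F(w):=f(\pi(w))$, the function $F$ is analytic in a neighbourhood of $\{|w|\le 1\}$, so its Taylor series in $w$ converges uniformly on $\{|w|\le 1\}$.

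The heart of the argument is to match partial sums of the lacunary series with Taylor polynomials of $F$. Expanding termwise,
\[
F(w)=\sum_{n=0}^{\infty}a_{m_{n}}\,\pi(w)^{m_{n}},
\]
and each $\pi(w)^{m_{n}}$ is a polynomial in $w$ supported on the integer interval $[p\,m_{n},\,(p+1)m_{n}]$. The lacunary hypothesis $m_{n+1}-m_{n}\to\infty$ forces these supports to become pairwise disjoint once $n$ is large enough, so the partial sum $\sum_{k=0}^{n}a_{m_{k}}\pi(w)^{m_{k}}$ coincides with the Taylor polynomial of $F$ up to degree $(p+1)m_{n}$. Uniform convergence of the Taylor expansion of $F$ on $\{|w|\le 1\}$, combined with the boundedness of $(a_{m_{n}})$, then transfers via $z=\pi(w)$ to uniform convergence of $s_{m_{n}}(z)$ on $\pi(\{|w|\le 1\})\supset L'$, as required.

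I expect the main obstacle to be the geometric construction of $\pi$: one must simultaneously ensure $\pi(\{|w|\le 1\})\subset G$, that its image covers the prescribed closed sub-arc $L'$, and that the degree $p$ is kept small enough to preserve the non-overlap of the supports of $\pi(w)^{m_{n}}$. Under a Hadamard-type hypothesis $m_{n+1}/m_{n}\ge q>1$ a single polynomial of fixed degree $p\approx 1/(q-1)$ suffices, but under the weaker condition in Definition~\ref{23} the degree must be allowed to grow slowly with $n$, staying below $m_{n+1}-m_{n}$ while still exhausting $L'$ in the limit. Verifying these quantitative geometric conditions is the technical crux; once they are in place, the rest reduces to uniform estimates coming from the Cauchy inequalities applied to $F$ on a slightly larger disc.
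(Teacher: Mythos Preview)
Your proposal follows the polynomial-substitution route that the paper (and Remmert) reserve for the \emph{overconvergence} result, Theorem~\ref{12}/\ref{14}. For the \emph{convergence} theorem~\ref{11} the paper proceeds quite differently: it invokes M.~Riesz's sector lemma (Lemma~\ref{21}) to place $L$ in the interior of a compact sector $S$ on which $f$ extends to $\widehat f$, introduces the auxiliary functions
\[
g_n(z)=\frac{\widehat f(z)-s_n(z)}{z^{\,n+1}}(z-w_1)(z-w_2),
\]
and uses the bound $\|\widehat f-s_n\|_L\le a^{-1}\|g_n\|_S$ together with Vitali's theorem to reduce matters to showing $g_{m_v}(z)\to 0$ on a fixed circle $|z|=t<1$. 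There the lacunary hypothesis $m_{v+1}-m_v\to\infty$ and the boundedness of $(a_{m_v})$ give $|g_{m_v}(z)|\le A\,t^{\,m_{v+1}-m_v}(1+t)^2/((1-t)t)\to 0$ directly; no control on the ratio $m_{v+1}/m_v$ is needed.

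Your approach, by contrast, hinges on the non-overlap of the supports $[p\,m_n,(p+1)m_n]$, i.e.\ on $p\,m_{n+1}>(p+1)m_n$, which is exactly an Ostrowski/Hadamard condition and is \emph{not} implied by Definition~\ref{23}. You correctly flag this as the obstacle, but the proposed remedy of letting $p$ grow with $n$ is a genuine gap rather than a technicality: once $p$ depends on $n$ there is no single $F(w)=f(\pi(w))$ whose Taylor polynomials you can compare with $s_{m_n}(\pi(w))$, and the geometry of $\pi(\{|w|\le 1\})$ degenerates as $p\to\infty$ (for $q(w)=\tfrac{c}{2}(w^p+w^{p+1})$ the image of $\overline{\Bbb E}$ meets $\partial\Bbb E$ only at the single point $c$, so covering a fixed arc $L'$ while increasing $p$ is not possible with this family). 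In short, the substitution method buys overconvergence under Ostrowski gaps, but under the weaker lacunary hypothesis the Riesz--Vitali argument is the right tool, and that is what the paper uses.
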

 Our treatment of this theorem is to relax the restrictive condition  of consecutive zero coefficients by instead requiring some coefficients of the  Taylor series to be sufficiently small (see Definition \ref{16}). More precisely, in Theorem \mbox{\ref{10}} we relax  the lacunas, whilst at the same time the conclusion of Theorem \ref{11} is preserved.

Another interesting theorem is that of Ostrowski overconvergence for Ostrowski series, which is defined as follows. 
\begin{de} \amscite{remmert2013classical}*{\S~11.2}\label{15}.
A power series $\ds\sum_{v=0}^\infty a_{v}z^{v}$ is called an \emph{Ostrowski} series if 
there exists a $\delta>0$ and two sequences $(m_{v})$ and $(n_{v})$ of non-negative integers for $v=0,1,2,\dots$ such that:
\begin{enumerate}[label=(\roman*)]
\item $0\leq m_{0}<n_{0}\leq m_{1}<n_{1}\leq...\leq m_{v}<n_{v}\leq m_{v+1}...$;
and $\, n_{v}-m_{v}>\delta m_{v},\, v=0,1,2,\dots$; and
\item $a_{j}=0$ if\, $m_{v}<j<n_{v},\, v=0,1,2,\dots$.
\end{enumerate}
\end{de}Let us denote by $B(r,0)$ a disc centred at the origin with radius $r>0$. The classical theorem of Ostrowski's overconvergence
is stated as follows.
\begin{To}[Ostrowski's overconvergence]
\amscite{remmert2013classical}*{\S~11.2, Thm 1}.\label{12} Let
$f(z)=\ds\sum_{v=0}^\infty a_{v}z^{v}$ be an Ostrowski series with radius of convergence
$r>0$, and let $A\subset{\partial B(r,0)}$ denote the set of all
the boundary points of $f$ that are not singular. Then the sequence
of sections $s_{m_{k}}(z)=\ds\sum_{v=0}^{m_{k}}a_{v}z^{v}$ converges
uniformly in a neighbourhood of $B(r,0)\cup A$. 
\end{To}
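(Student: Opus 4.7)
The plan is to reduce the theorem to a local statement at each regular boundary point $z_0\in A$ and then to establish that local statement by a Hadamard--Ostrowski polynomial substitution. The substitution converts the gap hypothesis $a_j=0$ for $m_v<j<n_v$ into a separation of Taylor coefficients after change of variable, identifying the partial sums $s_{m_k}$ with genuine Taylor polynomials of a function analytic on a strictly larger disc.

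After a rotation and rescaling one may assume $r=1$ and $z_0=1$. Since $z_0$ is regular, $f$ extends holomorphically to an open set $\Omega\supset B(1,0)\cup B(1,\rho)$ for some $\rho>0$. If I can exhibit, for each $z_0\in A$, a disc $B(z_0,\varepsilon)$ on which $s_{m_k}\to f$ uniformly, then together with uniform convergence of $s_{m_k}$ on compacta of $B(1,0)$, and the union of such discs over $z_0\in A$, this gives uniform convergence on a neighbourhood of $B(1,0)\cup A$.

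The core construction is the polynomial
$$\phi(w)\;=\;2^{-(q-p)}\,w^{p}(1+w)^{q-p},$$
where $p<q$ are positive integers chosen so that $q/p<1+\delta$. Direct computation gives $\phi(1)=1$ and, on $|w|=1$, $|\phi(w)|=2^{-(q-p)}|1+w|^{q-p}\le 1$ with equality only at $w=1$; by continuity there exists $R>1$ with $\phi(\{|w|\le R\})\subset\Omega$, so $F(w):=f(\phi(w))$ is holomorphic on $\{|w|<R\}$ and its Taylor polynomials at $0$ converge uniformly on every $\{|w|\le R'\}$, $R'<R$. The polynomial $\phi(w)^{j}$ has non-vanishing $w$-terms only in degrees between $pj$ and $qj$; using the gaps, the decomposition
$$F(w)\;=\;\sum_{j\le m_k}a_j\,\phi(w)^{j}\;+\;\sum_{j\ge n_k}a_j\,\phi(w)^{j}$$
has its first sum supported in $w$-degrees $\le q m_k$ and its second in $w$-degrees $\ge p n_k$. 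Since $q/p<1+\delta\le n_k/m_k$ forces $q m_k<p n_k$, these supports are disjoint, so $s_{m_k}(\phi(w))$ is precisely the Taylor polynomial of $F$ at $0$ of degree $q m_k$. Uniform convergence of these Taylor polynomials on some $\{|w|\le R'\}$ with $1<R'<R$, combined with $\phi'(1)=(p+q)/2\neq 0$ (so $\phi$ maps a neighbourhood of $w=1$ onto a neighbourhood of $z=1$), yields the desired uniform convergence $s_{m_k}(z)\to f(z)$ on a disc around $z=1$.

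The chief obstacle is the design of a $\phi$ meeting simultaneously $\phi(1)=1$, $|\phi|<1$ on $\{|w|=1\}\setminus\{1\}$, and a prescribed degree ratio $q/p<1+\delta$; the displayed binomial polynomial is tailored to satisfy all three. The remaining technicalities, namely the inclusion $\phi(\{|w|\le R\})\subset\Omega$ for some $R>1$ (a continuity estimate in terms of $\rho$) and the compactness argument assembling the local discs into a single neighbourhood of $B(r,0)\cup A$, are routine.
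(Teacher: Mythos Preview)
Your argument is correct and is essentially the same Hadamard--Ostrowski substitution that the paper employs (in its proof of the generalisation, Theorem~\ref{14}, which specialises to the classical case when the gap coefficients vanish identically). The only cosmetic difference is the choice of substituting polynomial: the paper uses $q(w)=\tfrac{c}{2}\,w^{p}(1+w)$ with $p\ge\delta^{-1}$, which is exactly your $\phi$ in the special case $q=p+1$, and your degree-ratio condition $q/p<1+\delta$ then reduces to the paper's $p>\delta^{-1}$.
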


We again relax the requirement for the gaps of the power series in Theorem \ref{12}. 
In Definition \ref{17}, we replace the consecutive zeros by adequately small coefficients instead, which allows us to prove Theorem \ref{14}---a generalization of Theorem \ref{12}.

Finally, the last classical result that interests us is Hadamard's theorem for Hadamard lacunary series that is defined as follows.
\begin{de}\amscite{remmert2013classical}*{\S~11.2, Def 3}.\label{def series Hadamard} An infinite power
series $\ds\sum_{v=0}^\infty a_{v}z^{v}$ is called a \emph{Hadamard lacunary}  series if  there exists a $\delta>0$ and an increasing sequence $(m_{v})$ of non-negative integers for $v=0,1,2,\dots$ 
such that 
\[
  m_{v+1}-m_{v}>\delta m_{v},\, v=0,1,2,\dots;\,\, a_{j}=0,\,\,\text{if}\,\, m_{v}<j<m_{v+1},\, a_{m_{v}}\neq0.
\]
\end{de}Every Hadamard lacunary series is a lacunary series in the sense of Definition \ref{23}, and
also an Ostrowski series (with $n_{v}=m_{v+1}$). The converse, however, is
not true: for a lacunary series as in Definition \ref{23}, only $\ds\lim_{v\rightarrow\infty}(m_{v+1}-m_{v})=\infty$
is required, whereas for an Ostrowski series gaps need to appear only 
``here
and there''. However, the Hadamard lacunary condition requires
that a gap lies  between any two successive terms that actually appear.

The classical Hadamard's gap theorem is stated as
follows.
\begin{To}[Hadamard's gap theorem]\amscite{remmert2013classical}*{\S~11.2, Thm
3.}\label{Hadamard} Every Hadamard lacunary series $\ds\sum_{v=0}^\infty a_{v}z^{v}$ with
radius of convergence $r>0$ has the disc $B(r,0)$ as a domain of
holomorphy. 
\end{To}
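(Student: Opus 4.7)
The plan is to deduce Hadamard's gap theorem directly from Ostrowski's overconvergence (Theorem \ref{12}), exploiting the observation already made in the excerpt that every Hadamard lacunary series is an Ostrowski series with $n_v=m_{v+1}$. Indeed, condition (i) of Definition \ref{15} becomes $n_v-m_v=m_{v+1}-m_v>\delta m_v$, which is exactly the Hadamard gap hypothesis, and condition (ii) reads $a_j=0$ for $m_v<j<m_{v+1}$, again exactly the Hadamard hypothesis.

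I would argue by contradiction. Suppose, contrary to the claim, that some $z_0\in\partial B(r,0)$ is not a singular point of $f$, so that the non-singular set $A$ appearing in Theorem \ref{12} is non-empty. Applying Theorem \ref{12} to $f$ viewed as an Ostrowski series, the sections
\[
s_{m_k}(z)=\sum_{v=0}^{m_k}a_v z^v
\]
converge uniformly on some open set $U$ containing $B(r,0)\cup A$, in particular on an open neighbourhood of $z_0$.

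The crucial step is to upgrade convergence of the subsequence $(s_{m_k})$ to convergence of the full sequence $(s_n)$ on $U$. This is where the Hadamard hypothesis does the real work: because $a_j=0$ for every $j$ with $m_v<j<m_{v+1}$, the partial sums are constant on each lacunary block, i.e.\ $s_j(z)=s_{m_v}(z)$ whenever $m_v\leq j<m_{v+1}$. Hence the entire sequence $(s_n)$ shares its limit with $(s_{m_k})$ and converges uniformly on $U$. (Note that this step would fail for a generic Ostrowski series, where non-zero coefficients may be present in the range $n_v\leq j\leq m_{v+1}$.)

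To conclude, since $z_0$ lies on the circle $|z|=r$, the open set $U$ must contain some point $w$ with $|w|>r$. Uniform convergence of $(s_n)$ on $U$ then yields convergence of $\sum_{v=0}^\infty a_v w^v$, so the radius of convergence of $f$ is at least $|w|>r$, contradicting the hypothesis. Therefore $A=\emptyset$, every boundary point is singular, and $B(r,0)$ is the domain of holomorphy of $f$. I do not expect a serious obstacle here: the only point deserving care is the identification of the Hadamard series as an Ostrowski series so that Theorem \ref{12} can be invoked, and the promotion of subsequential uniform convergence to full uniform convergence via the gap structure, both of which are essentially bookkeeping once the set-up is right.
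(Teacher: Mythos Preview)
Your proposal is correct and mirrors exactly the strategy the paper employs for its generalization, Theorem~\ref{8}: identify the series as a (quasi-)Ostrowski series, invoke the overconvergence theorem to get uniform convergence of the subsequence $(s_{m_k})$ on a neighbourhood reaching beyond $\partial B(r,0)$, then use the gap structure to show that the full sequence $(s_n)$ converges on the same set, contradicting the finiteness of the radius of convergence. Note that the paper itself does not give a separate proof of Theorem~\ref{Hadamard}---it is cited from Remmert---but your argument is precisely the classical one, and the paper's proof of Theorem~\ref{8} specializes to it when the intermediate coefficients are exactly zero rather than merely small.
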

\begin{rem} The condition $a_{m_{v}}\neq0$
in Definition \ref{def series Hadamard} of a Hadamard lacunary series is necessary, otherwise
we would obtain the series with zero coefficients that has trivial convergence
everywhere.
\end{rem} 

Our modification of Theorem \ref{Hadamard} is to once more replace the lacunas of the power series  by suitably  small values (see Definition \ref{18} and Theorem \ref{8}). 
 
 The above classical results were recently addressed by Breuer and Simon in \cite{BreuerSimon11a},  in which drew  a parallel  between the spectral theory of  Jacobi matrices and singular points on the boundary. However, their technique was based on right limits and is only applicable to bounded Taylor coefficients that did not tend to zero. 
On the other hand, our extensions of the classical results are free from both of these limitations.   In view of the similarity between natural boundaries and the absolutely
continuous spectrum of Jacobi matrices stated in \cite{BreuerSimon11a}, 
we  conjecture that there are numerically stable
results in spectral theory that are similar to theorems presented in
this paper.

Furthermore, in \cite{luhpower} it was found that the lacunary property is stable, i.e., if a lacunary series is expanded around another point, it remains a lacunary series. The content of this paper can be viewed as another version of stability for lacunary series  in  the following sense: we consider small alternations of lacunary series that destroy the lacunas but keep the natural boundary conclusion. It is then natural to ask the open question:  are our generalised series (Definitions \ref{16}, \ref{17} and \ref{18}) stable in the sense of \cite{luhpower}?

This topic is a part of a broader picture.
It has already been mentioned that an intriguing connection with spectral theory was discovered in \cite{BreuerSimon11a}. 
On the other hand, Taylor coefficients can be viewed as one of the
fundamental examples of the covariant transform \cite{Kisil97c,Kisil11c}
of analytic functions. Many classical results of harmonic analysis
describe how properties of functions are ``transported'' by the
covariant transform \cite{Kisil12d}. Therefore, the results presented in this
paper show a certain stability in this transportation: a
small variation in the Taylor coefficients preserves certain properties of the function under consideration. 


\section{Poles on the circle of convergence}
In this section, we prove Theorem \ref{9}, which is a generalization of Theorem \ref{7} presented in the Introduction.  These results are  a presentation of the relationship
between coefficients of a power series and singularities of the function
they present. The key to proving Theorem \ref{9} will be  Lemma
\ref{13}.

\begin{lem}\amscite{MR580154}*{\S~5.3, Prob.243}\label{13}
Let $\ds\sum_{n=0}^{\infty}a_{n}z^{n}$ be the expansion into a power
series of a rational function whose denominator ( relative prime to
the numerator) has degree $q$. If $A_n=\max\{|a_{n}|,|a_{n-1}|,...,|a_{n-q+1}|\}$, then the radius of convergence $r$ satisfies

\begin{equation}\label{maximum radius}
	\ds\lim_{n\rightarrow\infty}\sqrt[n]{A_{n}}=\frac{1}{r}.
\end{equation}
\end{lem}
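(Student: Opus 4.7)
The plan is to leverage the rationality of $f$ through the linear recurrence it imposes on its Taylor coefficients, and then to analyse the dynamics of the associated companion matrix. The upper bound $\limsup \sqrt[n]{A_n}\le 1/r$ is immediate from Cauchy-Hadamard: given $\epsilon>0$, $|a_n|\le(1/r+\epsilon)^n$ eventually, so $A_n\le C_\epsilon(1/r+\epsilon)^n$. Hence the substance lies in the matching lower bound $\liminf\sqrt[n]{A_n}\ge 1/r$.

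First I would write $f=P/Q$ in lowest terms, $Q(z)=\sum_{k=0}^{q}b_k z^k$ with $b_0=Q(0)\ne 0$ and $b_q\ne 0$. Comparing coefficients in $Q(z)f(z)=P(z)$ yields, for every $n>N_0:=\deg P$, the constant-coefficient recurrence $\sum_{k=0}^{q}b_k a_{n-k}=0$. Packaging $V_n:=(a_n,a_{n-1},\dots,a_{n-q+1})^{\top}$ turns this into $V_n=MV_{n-1}$, where $M$ is the companion matrix whose characteristic polynomial equals $z^{q}Q(1/z)/b_0$. Its eigenvalues are therefore the reciprocals of the poles of $f$, and its spectral radius is $1/r$; moreover $A_n=\|V_n\|_\infty$, so it suffices to prove $\|V_n\|\ge c(1/r)^n$ for all large $n$.

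Next I would split $\mathbb{C}^q=E_1\oplus E_0$ into the generalized eigenspaces of $M$ for eigenvalues of modulus $1/r$ and of modulus strictly less than $1/r$ respectively, and decompose $V_{N_0}=V^{(1)}+V^{(0)}$. A brief contradiction argument forces $V^{(1)}\ne 0$: otherwise $\|V_n\|^{1/n}$ would be bounded above by some $\rho_0<1/r$, contradicting the Cauchy-Hadamard identity $\limsup|a_n|^{1/n}=1/r$ (which is attained because $Q$ has at least one root on $|z|=r$ not cancelled by $P$). On each Jordan block $J_\lambda=\lambda I+N$ of $M|_{E_1}$ with $|\lambda|=1/r$, the expansion $J_\lambda^n v=\sum_{j}\binom{n}{j}\lambda^{n-j}N^j v$ shows that the coordinate of $J_\lambda^n v$ along the honest eigenvector of the block equals $\lambda^n$ times a polynomial in $n$ that fails to vanish identically as soon as the associated block component of $v$ is nonzero. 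Summing across distinct blocks, whose honest eigenvectors are linearly independent, delivers $\|M^{n-N_0}V^{(1)}\|\ge c(1/r)^n$ eventually, while $\|M^{n-N_0}V^{(0)}\|=o((1/r)^n)$; combining yields $A_n\ge c'(1/r)^n$ and hence $\liminf\sqrt[n]{A_n}\ge 1/r$.

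The hardest step will be this last linear-algebraic lower bound in the case where several poles sit on the convergence circle or some pole there has multiplicity greater than one: such configurations allow individual values $|a_n|$ to be very small along subsequences (which is precisely why the statement is phrased in terms of the running maximum $A_n$), and one must certify that $q$ consecutive components of $V_n$ cannot all cancel faster than rate $(1/r)^n$. The Jordan-form computation above, together with the linear independence of eigenvectors across distinct eigenvalues, is exactly what forbids such a conspiracy and closes the proof.
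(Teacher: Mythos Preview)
The paper does not prove this lemma; it merely quotes it from P\'olya--Szeg\H{o} (Part~III, \S5, Problem~243) and invokes it as a black box in the proof of Theorem~\ref{9}. There is therefore nothing in the paper to compare your argument against.

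Your proof is correct. The companion-matrix formulation is a clean repackaging of the classical partial-fraction argument: writing $a_n$ as a finite combination $\sum_i P_i(n)\alpha_i^{-n}$ over the poles $\alpha_i$ and then using a Vandermonde-type independence to prevent $q$ consecutive coefficients from all being $o((1/r)^n)$ is exactly what your Jordan-block computation encodes. Two small remarks. First, the phrase ``summing across distinct blocks'' could be sharpened: the point is not a sum but that the eigenvector coordinates are \emph{separate} coordinates in the Jordan basis, so the norm of $J^nW$ dominates any single one of them; equivalence of norms under the fixed change of basis $P$ then transfers the bound to $\|V_n\|_\infty=A_n$. Second, your contradiction for $V^{(1)}\ne 0$ silently uses that $q\ge 1$ (so that $Q$ has a root and $r<\infty$); this is of course the only case with content, but it is worth a word. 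With those cosmetic tweaks the argument is complete and would serve perfectly well as a self-contained proof of the lemma.
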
 To present our modification of Theorem \ref{9}, we consider the number of coefficients
that shall exceed some small values instead of being non-zero in Theorem \ref{7}, and we obtain our new theorem as follows. 

\begin{To}\label{9}
Let $\ds\sum_{n=0}^\infty  a_{n}z^{n}$ be a power series that has only poles (and
no other singularities) on the circle of convergence $|z|=\rho$,
as well as no other singularities inside the circle $|z|=\rho_{1}>\rho$.
Let $\epsilon>0$  such that $\frac{1}{\rho_{1}}<\frac{1}{\rho}-{2}\epsilon$.
If $v_{n}$ is the number of coefficients $|a_{j}|>(\frac{1}{\rho}-\epsilon)^{j}$
among $n$ coefficients $a_{0},a_{1},...,a_{n-1}$, the number of
poles on the circle of convergence is not smaller than $\ds\limsup_{n\rightarrow\infty}\frac{n}{v_{n}}$.
\end{To}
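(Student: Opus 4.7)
The plan is to mimic the classical argument behind Theorem \ref{7} but with the qualitative claim ``every block of $p$ consecutive indices contains a non-zero coefficient'' replaced by the quantitative claim ``every sufficiently late block of $p$ consecutive indices contains some $j$ with $|a_j|>(1/\rho-\epsilon)^j$''. Let $p$ be the sum of the multiplicities of the poles of $f$ on $\{|z|=\rho\}$, and decompose $f=g+h$, where $g$ is the rational function whose principal parts collect all those poles (so the denominator of $g$, relatively prime to its numerator, has degree $p$) and $h$ is holomorphic on $\{|z|<\rho_1\}$. Lemma \ref{13} applied to $g$ gives
\[
\lim_{n\to\infty}\sqrt[n]{\max(|g_n|,\ldots,|g_{n-p+1}|)}=\frac{1}{\rho},
\]
while holomorphy of $h$ on $\{|z|<\rho_1\}$ yields $\limsup_{n\to\infty}|h_n|^{1/n}\le 1/\rho_1<1/\rho-2\epsilon$.

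Fix $\eta\in(0,\epsilon/2)$. The displayed limit forces, for all $n$ large enough, the existence of some $j_n\in\{n-p+1,\dots,n\}$ with $|g_{j_n}|>(1/\rho-\eta)^n$. Because $0\le n-j_n\le p-1$ is uniformly bounded, factoring out the difference yields $|g_{j_n}|\ge C(1/\rho-\eta)^{j_n}$ for some constant $C=C(\rho,\eta,p)>0$. As $\eta<\epsilon$, the ratio $(1/\rho-\eta)/(1/\rho-\epsilon)$ is strictly greater than $1$, so $C\bigl((1/\rho-\eta)/(1/\rho-\epsilon)\bigr)^{j_n}\to\infty$ and eventually $|g_{j_n}|>2(1/\rho-\epsilon)^{j_n}$. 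In parallel, $1/\rho-2\epsilon>1/\rho_1>0$ combined with the decay rate of $h$ gives $|h_{j_n}|<(1/\rho-2\epsilon)^{j_n}<(1/\rho-\epsilon)^{j_n}$ for large $n$. Subtracting,
\[
|a_{j_n}|\ge |g_{j_n}|-|h_{j_n}|>2(1/\rho-\epsilon)^{j_n}-(1/\rho-\epsilon)^{j_n}=(1/\rho-\epsilon)^{j_n},
\]
so $j_n$ is among the indices counted by $v_n$.

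The counting step then mirrors the classical one: there exists $N$ such that every window of $p$ consecutive indices in $[N,\infty)$ contains a ``large'' coefficient in the above sense, so $v_n\ge (n-N)/p$ for $n\ge N$, and hence $\limsup_{n\to\infty} n/v_n\le p$, which is the desired inequality. The main obstacle is the delicate balancing in the middle step: the lower bound on $|g_{j_n}|$ must survive (i) a loss when we pass from the window maximum to the chosen index $j_n$, (ii) a comparison with the threshold $(1/\rho-\epsilon)^{j_n}$, and (iii) subtraction of $|h_{j_n}|$. The gap hypothesis $1/\rho_1<1/\rho-2\epsilon$ is calibrated precisely for this: one copy of $\epsilon$ of slack is spent on (ii) and another on (iii), while (i) costs only a bounded multiplicative constant.
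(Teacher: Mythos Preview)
Your proof is correct and follows essentially the same route as the paper: decompose $f$ into its rational principal part plus a function holomorphic on $\{|z|<\rho_1\}$, apply Lemma~\ref{13} to the rational part, bound the Taylor coefficients of the holomorphic part via its radius of convergence, and then count one ``large'' coefficient per window of $p$ consecutive indices to get $v_n\ge (n-N)/p$. Your introduction of the auxiliary parameter $\eta<\epsilon$ to guarantee $|g_{j_n}|>2(1/\rho-\epsilon)^{j_n}$ before subtracting $|h_{j_n}|$ is in fact a slightly more careful handling of the triangle-inequality step than the paper's own argument, which passes directly from $|\alpha_{\bar n}|>(1/\rho-\epsilon)^{\bar n}>|b_{\bar n}|$ to $|a_{\bar n}|>(1/\rho-\epsilon)^{\bar n}$.
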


\begin{proof} Let $k$ be the number of poles on the circle of convergence.
According to the hypotheses, we set $\ds\sum_{n=0}^{\infty}$ $ a_{n}z^{n}=\ds\sum_{n=0}^{\infty}\alpha_{n}z^{n}+\sum_{n=0}^{\infty}b_{n}z^{n}$
such that:
\begin{enumerate}[label=(\roman*)]
\item the series $\ds\sum_{n=0}^\infty \alpha_{n}z^{n}$ is the expansion into a power series
of the rational part of $\ds\sum_{n=0}^\infty  a_{n}z^{n}$ that has the radius of
convergence $\rho$; and
\item the series $\ds\sum_{n=0}^\infty  b_{n}z^{n}$ has the radius of convergence
$\rho_{1}$ defined on the hypothesis, i.e. $\ds{\limsup}_{n\rightarrow\infty}\sqrt[n]{|b_{n}|}=\frac{1}{\rho_{1}}<\frac{1}{\rho}-2\epsilon$.
\end{enumerate}
Then, for $\epsilon>0$ given in the hypothesis, there exists $N_{1}\in\Bbb{N}$
such that for all $n\geq N_{1}$, $|b_{n}|<(\frac{1}{\rho_{1}}+\epsilon)^{n}<(\frac{1}{\rho}-\epsilon)^{n}$.
According to $(i)$ and Lemma \ref{13}, we have for $\epsilon$,
 there exists $N_{2}\in\Bbb{N}$ such that for all $n\geq N_{2}$
we have that
\[
\max(|\alpha_{n}|,|\alpha_{n-1}|,...,|\alpha_{n-k+1}|)>\max\left((\frac{1}{\rho}-\epsilon)^{n},...,(\frac{1}{\rho}-\epsilon)^{n-k+1}\right).
\]
Set $N=\max(N_{1},N_{2})$. Then for all $n\geq N$, there is $\bar{n}$, $n\geq\bar{n}\geq n-k+1$ such that $|\alpha_{\bar{n}}|>(\frac{1}{\rho}-\epsilon)^{\bar{n}}>|b_{\bar{n}}|$.  Otherwise, $\max(|\alpha_{n}|,|\alpha_{n-1}|,...,|\alpha_{n-k+1}|)=0$. Then by \eqref{maximum radius}, the radius of convergence of the series $\ds\sum_{n=0}^\infty a_n z^n$ is infinite, which gives a contradiction with the given finite radius.
Thus, $|a_{\bar{n}}|=|\alpha_{\bar{n}}+b_{\bar{n}}|>(\frac{1}{\rho}-\epsilon)^{\bar{n}}$.
Consequently, $\ds\frac{n-N}{k}\leq v_{n}$. Then, $\ds\frac{n}{k}-c\leq v_{n}$,
where $\ds c=\frac{N}{k}$. Thus $ k\geq\ds\limsup_{n\rightarrow\infty}\frac{n}{v_{n}}$.
\end{proof} 

\section{Expansion of Analytic Functions and its convergence on the boundary }

 In this section, we shall generalise a result on the boundary behaviour
of a power series 
(see Theorem \ref{11}), which links the extension
problem for a power series with the convergence of its sequence of partial
sums. 
This is by replacing the consecutive zero coefficients
in a lacunary series (see Definition \ref{23}) with small values at the same places to create
a quasi-lacunary series as follows.

\begin{de}\label{16} An infinite power series $\ds\sum_{n=0}^\infty a_{n}z^{n}$ is called
a \emph{quasi-lacunary} series if there exists an increasing  sequence $(m_{v})$ of  non-negative integers for $v=0,1,2,\dots$
such that $$\lim_{v\rightarrow\infty}(m_{v+1}-m_{v})=\infty,\quad \text{and}\quad |a_{j}|\leq |c_{j}|,\quad m_{v}<j<m_{v+1},\, v=0,1,2,\dots,$$ where $(c_j),\, j=0,1,2,\dots,$ is a $p-$ summable sequences for some $p>1$, i.e. $\ds\sum_{j=0}^\infty |c_{j}|^p<\infty$.

\end{de} 
Let $\Bbb{E}=\{z\in\Bbb{C}:|z|<1\}$. To give the proof of Theorem \ref{10},
we need the following lemma.

\begin{lem}[M.Riesz] \amscite{remmert2013classical}*{\S~11,
Lem1}\label{21}. For every arc of holomorphy $L\subset$ $\partial\Bbb{E}$
of a power series $f(z)=\ds\sum_{v=0}^\infty a_{v}z^{v}$ with radius of convergence
$1$, there exists a compact circular sector $S$ with vertex at $0$
such that $L$ lies in the interior $\mathring{S}$ of $S$ and $f$
has a holomorphic extension $\widehat{f}$ to $S$. Let $z_{1},z_{2}\neq0$
be the corners of $S$, and let $w_{1}$ and $w_{2}$ be the points
of intersection of $\partial\Bbb{E}$ with $[0,z_{1}]$ and $[0,z_{2}]$,
respectively. Then $|w_{1}|=|w_{2}|=1$ and $s=|z_{1}|=|z_{2}|>1$.
\end{lem}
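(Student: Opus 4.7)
The plan is to patch local analytic extensions of $f$ across $L$ into a single holomorphic function on an open set $U$, and then inscribe a sector $S$ with the required geometric properties inside $U$. Because $L$ is an arc of holomorphy, each $w \in L$ is a regular point of $f$, so $f$ extends holomorphically to some open disc $D_w$ around $w$. Compactness of $L$ reduces this open cover to a finite one $D_{w_1}, \dots, D_{w_N}$. Gluing the local extensions with $f|_{\mathbb{E}}$ yields a holomorphic function $\widehat{f}$ on the open set $U = \mathbb{E} \cup \bigcup_j D_{w_j}$, with $L \subset U$. Consistency on overlaps follows from the identity principle, since each intersection $D_{w_j} \cap D_{w_k}$ is connected and meets $\mathbb{E}$, where both local extensions agree with $f$.

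Next, compactness of $L$ inside the open set $U$ produces some $\varepsilon > 0$ such that the $\varepsilon$-neighbourhood $L_\varepsilon = \{z \in \mathbb{C} : \mathrm{dist}(z,L) < \varepsilon\}$ is contained in $U$. Parametrising $L = \{e^{i\theta} : \alpha \le \theta \le \beta\}$, I would then pick a small angular widening $\delta > 0$ and a small radial extension $s > 1$, and set $z_1 = s e^{i(\alpha - \delta)}$, $z_2 = s e^{i(\beta + \delta)}$, together with the closed circular sector
\[
S = \{r e^{i\theta} : 0 \le r \le s,\ \alpha - \delta \le \theta \le \beta + \delta\}.
\]
Then $w_j := [0,z_j] \cap \partial \mathbb{E}$ automatically satisfies $|w_1| = |w_2| = 1$, while $|z_1| = |z_2| = s > 1$ holds by construction, and the strict inequalities $\alpha - \delta < \alpha$, $\beta + \delta > \beta$, $s > 1$ place every point of $L$ in the interior $\mathring{S}$.

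The only real technical point, and the main (though entirely routine) obstacle, is to verify that $S \subset \mathbb{E} \cup U$ for a suitable choice of $\delta$ and $s$. The portion of $S$ with $r \le 1$ lies in $\mathbb{E}$; the remainder consists of points $r e^{i\theta}$ with $1 \le r \le s$ and $\theta$ within $\delta$ of $[\alpha, \beta]$, whose distance to $L$ is controlled by $\sqrt{(r-1)^2 + r^2\,\mathrm{dist}(\theta,[\alpha,\beta])^2} \le \sqrt{(s-1)^2 + s^2 \delta^2}$. Choosing $s-1$ and $\delta$ small enough that this bound lies below $\varepsilon$ forces $S \setminus \mathbb{E} \subset L_\varepsilon \subset U$. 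Thus $\widehat{f}$ is holomorphic on all of $S$ and the lemma follows.
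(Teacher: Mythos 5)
The paper does not prove this lemma; it simply cites it from Remmert's \emph{Classical Topics in Complex Function Theory} (\S 11, Lemma 1), so there is no internal argument to compare yours against. Your sketch is the standard proof of Riesz's lemma and is essentially correct: cover the compact arc $L$ by finitely many discs of regularity, glue, and then inscribe a thin closed sector inside the resulting open set.

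Two points deserve a word. First, your gluing step asserts that each overlap $D_{w_j}\cap D_{w_k}$ meets $\Bbb E$, which is what lets the identity theorem propagate agreement from $\Bbb E\cap D_{w_j}\cap D_{w_k}$ to the whole (convex, hence connected) lens. This is true but not entirely obvious: one needs to observe that if the two discs, both centred on $\partial\Bbb E$, overlap at all, then their intersection contains a point of the open chord joining the two centres, and every point of that open chord lies strictly inside $\Bbb E$ by convexity. Without such a remark the reader could worry about a lens sitting entirely outside the closed unit disc, where the identity theorem would have nothing to grab onto. Second, your formula $\sqrt{(r-1)^2 + r^2\,\mathrm{dist}(\theta,[\alpha,\beta])^2}$ is not the exact Euclidean distance from $r e^{i\theta}$ to $L$, but it is an upper bound for all $r\ge 1$, which is all the argument needs; stating it as a bound rather than an identity would make the step airtight. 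With those two clarifications the proof is complete.
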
 To prove the next result, we consider the functions
\begin{equation}
g_{n}(z)=\frac{\widehat{f}(z)-s_{n}(z)}{z^{n+1}}(z-w_{1})(z-w_{2}),\quad\text{where}\quad s_n(z)=\sum_{k=0}^n a_k z^k,\, n\in\Bbb{N}.\label{19}
\end{equation}

So, every function $g_{n}$ is holomorphic in $S$. In the proof of
the next theorem, $S$ and $\widehat{f}$ can be chosen as in Lemma \ref{21} and $g_{n}$ as in \eqref{19}. Denoting by $  (\|{\widehat{f}}-s_{n}\|)|_{L}$ and  $\| g_{n}\|)|_{S}$ be the maximum of the absolute values of the functions $({\widehat{f}}-s_{n})(z)$ and $g_{n}(z)$ for all $z$ in $L$ and $S$, respectively. Since $|z|=1$ for $z\in L$, the inequality 
\begin{equation}
  (\|{\widehat{f}}-s_{n}\|)|_{L}\leq a^{-1} (\| g_{n}\|)|_{S},\quad where\quad a=\min_{z\in L}\lbrace|(z-w_{1})(z-w_{2})|\rbrace>0,\label{22}
\end{equation}
holds for all $n\in\Bbb{N}$.

Now, we can prove our convergence theorem for a quasi-lacunary
series.

\begin{To}[Convergence for a quasi-lacunary
series]\label{10}  Let $f(z)=\ds\sum_{n=0}^\infty a_n z^n$ be a quasi-lacunary
series such that $(a_{m_v})$, where $a_{m_v}$ are defined in  Definition \ref{16}, is a bounded sequence of coefficients. Then the sequence
of partial sums $s_{m_{v}}$ converges uniformly on every arc of holomorphy
$L$ of $f$.
\end{To}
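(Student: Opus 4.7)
The plan is to invoke Lemma~\ref{21} (M.~Riesz) to produce a compact circular sector $S$ with $L\subset\mathring S$ and a holomorphic extension $\hat f$ of $f$ to $S$, and then work with the functions $g_n$ defined in \eqref{19}. By the inequality \eqref{22}, it suffices to show that $g_{m_v}\to 0$ uniformly on $L$; since $L$ is compact in $\mathring S$, this will follow once $g_{m_v}\to 0$ uniformly on compact subsets of $S$. I therefore set up a normal family argument: if I can show both (a) that $\{g_{m_v}\}_{v\ge 0}$ is uniformly bounded on $\bar S$, and (b) that $g_{m_v}(z)\to 0$ pointwise on the open set $\{z\in S:|z|<1\}$, then Montel's theorem yields normality and the identity theorem forces any subsequential limit to vanish identically on $S$, giving uniform convergence on compacts.

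For (b), the tail $\hat f(z)-s_{m_v}(z)=\sum_{k>m_v}a_k z^k$ splits into a lacunary piece controlled by the bounded sequence $(a_{m_v})$ and by the hypothesis $m_{v+1}-m_v\to\infty$, and a perturbation piece estimated via H\"older's inequality with conjugate exponents $p,p'$:
\[
\sum_{k>m_v}|a_k||z|^k \;\le\; \Bigl(\sum_{k>m_v}|c_k|^p\Bigr)^{1/p}\Bigl(\sum_{k>m_v}|z|^{kp'}\Bigr)^{1/p'}.
\]
Dividing both contributions by $|z|^{m_v+1}$, the lacunary piece becomes of order $|z|^{m_{v+1}-m_v-1}\to 0$ and the perturbation piece is dominated by the tail $(\sum_{k>m_v}|c_k|^p)^{1/p}\to 0$, which yields the pointwise vanishing. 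For (a), the maximum modulus principle reduces the problem to estimating $|g_{m_v}|$ on $\partial S$; on the arc $|z|=s$ one bounds
\[
\bigl|s_{m_v}(z)\bigr| \;\le\; M\sum_{k=0}^{v} s^{m_k} \;+\; \sum_{j=0}^{m_v}|c_j|\,s^{j} \;\le\; C\,s^{\,m_v+1}
\]
(the second sum again by H\"older), giving a uniform bound on $|g_{m_v}|$ there, while on the two radial segments the vanishing of $(z-w_1)(z-w_2)$ at $w_1,w_2\in\partial\mathbb E$ absorbs the potential singularity of $(\hat f-s_{m_v})/z^{m_v+1}$ near the unit circle.

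The main obstacle is the careful balancing on the portion of the radial segments with $|z|<1$: there the denominator $z^{m_v+1}$ is of moderate size and the factor $(z-w_1)(z-w_2)$ vanishes at the crossing with the unit circle, so one must make sure that the $\ell^p$-controlled perturbation does not disturb the classical cancellation. The interplay between the hypothesis $m_{v+1}-m_v\to\infty$ (which drives the lacunary tail to zero) and the $p$-summability of $(c_j)$ (which drives the perturbation tail to zero and offsets the growth of $1/(1-|z|^{p'})^{1/p'}$ as $|z|\to 1$) is the technical heart of the argument.
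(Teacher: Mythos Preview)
Your approach is essentially the same as the paper's: both use the M.~Riesz sector $S$, the auxiliary functions $g_n$ of \eqref{19}, the reduction via \eqref{22}, and the decomposition of the tail into a bounded ``lacunary'' part (controlled by $m_{v+1}-m_v\to\infty$) plus a ``perturbation'' part handled by H\"older's inequality with exponents $p,p'$. The paper invokes Vitali's theorem directly and only verifies pointwise convergence on a circle $|z|=t<1$, whereas you spell out the equivalent Montel/identity-theorem route and explicitly address the uniform boundedness on $\partial S$---this makes your write-up slightly more self-contained, but the underlying argument is the same.
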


 \begin{proof} It must be shown that  $\ds\lim_{v\rightarrow\infty}(\|{\widehat{f}}-s_{m_{v}}\|)|_{L}=0$.
By \eqref{22}, it suffices to show that the sequence $g_{m_{v}}$
tends  locally uniformly to zero in $\mathring{S}$. Let $t\in(0,1)$. By Vitali's
theorem \amscite{remmert2013classical}*{\S~7.3, Thm.2}, it suffices
to show that $\ds\lim_{v\rightarrow\infty} g_{m_{v}}(z)=0$ for $|z|=t$. Setting $A=\sup|a_{m_{v}}|$,
we have  
\begin{eqnarray*}
( |{\widehat{f}}(z)-s_{m_{v}}(z)|)|_{S} & = &\left.{\left (\left|\sum_{k\in\cup_{n=v}^\infty(m_n,m_{n+1}) }a_{k}z^{k}+\sum_{j=v+1}^{\infty}a_{m_j}z^{m_j}\right|\right)}\,\right|_{S}\\
 & \leq & \sum_{k\in\cup_{n=v}^\infty(m_n,m_{n+1}) }|a_{k}|t^{k}+\sum_{j=v+1}^{\infty}|a_{m_j}|t^{m_j}\,.
\end{eqnarray*}
According to \eqref{19}, we have that
\begin{eqnarray*}
( | g_{m_{v}}(z)|)|_{S} & = & \frac{ ( |\widehat{f}(z)-s_{m_{v}}(z)|)|_{S}}{|z^{m_{v}+1}|}\cdot|z-w_{1}||z-w_{2}|\\
 & \leq & \frac{ ( |\widehat{f}(z)-s_{m_{v}}(z)|)|_{S}}{t^{m_{v}+1}}\cdot(1+t)^{2}\,.
\end{eqnarray*}

Subsequently, 
\begin{equation}
( | g_{m_{v}}(z)|)|_{S}\leq\sum_{k\in\cup_{n=v}^\infty(m_n,m_{n+1}) }|a_{k}|t^{k}\frac{(1+t)^{2}}{t^{m_{v}+1}}+\sum_{j=v+1}^{\infty}|a_{m_j}|t^{m_j}\frac{(1+t)^{2}}{t^{m_{v}+1}}.\label{6}
\end{equation}

The first term of the right hand side in \eqref{6} tends to zero.
In fact, by Holder's inequality for $1<p, q<\infty$ and $\ds\frac{1}{p}+\frac{1}{q}=1$, we have 
 \begin{eqnarray*}\nonumber
\sum_{k\in\cup_{n=v}^\infty(m_n,m_{n+1}) }|a_{k}|t^{k}\frac{(1+t)^{2}}{t^{m_{v}+1}} & \leq & \left(\sum_{k\in\cup_{n=v}^\infty(m_n,m_{n+1}) }|a_{k}|^p\right)^{\frac{1}{p}}\left(\sum_{k\in\cup_{n=v}^\infty(m_n,m_{n+1}) }t^{q k}\right)^{\frac{1}{q}}\frac{(1+t)^{2}}{t^{m_{v}+1}}\\&\leq & \left(\sum_{k\in\cup_{n=v}^\infty(m_n,m_{n+1}) }|a_{k}|^p\right)^{\frac{1}{p}}\left(\frac{t^{q m_v}}{1-t^q}\right)^{\frac{1}{q}}\frac{(1+t)^{2}}{t^{m_{v}+1}}
\\
 & \leq & \left(\sum_{k\in\cup_{n=v}^\infty(m_n,m_{n+1}) }|a_{k}|^p\right)^{\frac{1}{p}}\frac{t^{m_v}}{(1-t^q)^\frac{1}{q}}\cdot\frac{(1+t)^{2}}{t^{m_{v}+1}}\\
 & = & \left(\sum_{k\in\cup_{n=v}^\infty(m_n,m_{n+1})}|a_{k}|^p\right)^{\frac{1}{p}}\frac{(1+t)^{2}}{t(1-t^q)^\frac{1}{q}}\rightarrow0\quad as\quad v\rightarrow\infty.
\end{eqnarray*}
Indeed, by the hypothesis, we have $\ds\sum_{k\in\cup_{n=v}^\infty(m_n,m_{n+1})}|a_{k}|^p<\infty$. Therefore, when $m_{n}\rightarrow\infty$ as $n\rightarrow\infty$,  it follows that
$\ds\sum_{k\in\cup_{n=v}^\infty(m_n,m_{n+1})}|a_{k}|^p\rightarrow0$.

On the other hand, the second term of the right hand side in \eqref{6}
also tends to zero. Indeed, since $\ds\lim_{v\rightarrow\infty}(m_{v+1}-m_{v})=\infty$ and $0<t<1$,
then $\ds\lim_{v\rightarrow\infty} t^{m_{v+1}-m_{v}}=0$,
\begin{align*}
	\sum_{j=v+1}^{\infty}|a_{m_j}|\, t^{m_j}\frac{(1+t)^{2}}{t^{m_{v}+1}} & \leq  A\,\frac{t^{m_{v+1}}}{1-t}\cdot\frac{(1+t)^{2}}{t^{m_{v}+1}}\\
 & =  A\, t^{m_{v+1}-m_{v}}\cdot\frac{(1+t)^{2}}{(1-t)t}\longrightarrow0.
\end{align*}
 Therefore, $\ds\lim_{v\rightarrow \infty}( \| g_{m_{v}}\|)|_{S}=0$.
Then, $\ds\lim_{v\rightarrow\infty}( \|\widehat{f}-s_{m_{v}}\|)|_{L}=0$. Thus, the sequence of partial sums $(s_{m_{v}}(z))$ converges uniformly on every arc
of holomorphy $L$ of $f$. 
\end{proof} 

\section{Expansion of Analytic Functions and Overconvergence theorems}

This section is dedicated to proving  two results (namely, Theorems \ref{14} and \ref{8}), which are further developments
of  Theorems \ref{12} and \ref{Hadamard} stated in the Introduction.
 These classical theorems demonstrate close relationships between
overconvergence and the gaps of the power
series, and once our modifications--- the removal of the restrictive gap conditions--- are applied we are still able to derive the same conclusion. 

To show Theorem \ref{14}, we first need the definition of a quasi-Ostrowski
series. 
\begin{de}\label{17} An infinite power series $\ds\sum_{v=0}^\infty a_{v}z^{v}$ is called a {\emph {quasi-Ostrowski}} series
if there exists a $\delta>0$, a positive decreasing sequence $(c_{r}),\, r=0,1,2,\dots$,
$c_{r}\searrow0$, and two sequences $(m_{k})$ and $(n_{k})$ of non-negative integers for $k=0,1,2,\dots,$ such that: 

\begin{enumerate}[label=(\roman*)]
\item $ n_{k}-m_{k}>\delta m_{k},\,\, k=0,1,2,\dots\,$, and $\,0\leq m_{0}<n_{0}\leq m_{1}<n_{1}\leq...\leq m_{k}<n_{k}\leq m_{k+1}...\,$; and
\item $|a_{j}|<\frac{1}{m_{k}^{2}}c_{j}$,\quad $m_{k}<j<n_{k},\,\, k=0,1,2\dots$.
\end{enumerate}
\end{de}

Recall that $B(r,0)$ denotes the open disc centred at zero and radius $r>0$. Then, we have our generalization of Theorem \ref{12} as follows.

\begin{To}[Overconvergence theorem for a quasi-Ostrowski series]\label{14}
Let $f(z)=\ds\sum_{n=0}^\infty a_{n}z^{n}$ be a quasi-Ostrowski series with radius
of convergence $r>0$, and let $A\subset{\partial B(r,0)}$ denote
the set of all the boundary points of $f$ that are not singular.
Then the sequence of sections $s_{m_{k}}(z)=\ds\sum_{n=0}^{m_{k}}a_{n}z^{n}$
converges uniformly in a neighbourhood of $B(r,0)\cup A$.
 \end{To}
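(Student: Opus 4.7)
The plan is to reduce Theorem~\ref{14} to a convergence statement for an auxiliary function via the classical Ostrowski substitution. After rescaling we may assume $r = 1$, and by rotation invariance it suffices to prove the conclusion in a neighbourhood of a single fixed non-singular boundary point $z_0 \in A$, taken to be $1$. Choose an integer $p > 1/\delta$, so that $(p+1)/p < 1 + \delta$, and set $\phi(w) = \tfrac{1}{2}(w^p + w^{p+1}) = \tfrac{1}{2} w^p (1+w)$. Then $\phi(\overline{\Bbb{E}}) \subset \overline{\Bbb{E}}$ with $\phi(w) \in \partial\Bbb{E}$ on $|w| \leq 1$ only at $w = 1$, $\phi'(1) = p + \tfrac{1}{2} \neq 0$, and $F(w) := f(\phi(w))$ extends holomorphically to a disc $|w| < R$ with some $R > 1$, since $f$ extends past $z = 1$ and any singularity of $f$ on $\partial\Bbb{E}$ other than $1$ pulls back under $\phi$ to a point of modulus strictly greater than $1$. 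Consequently the Taylor series $F(w) = \sum_N b_N w^N$ converges uniformly on every closed disc $|w| \leq \rho$ with $1 < \rho < R$.

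Next I compare the $w$-partial sum $S_{M_k}(F)(w) := \sum_{N \leq M_k} b_N w^N$ with $s_{m_k}(f)(\phi(w))$, where $M_k := (p+1)m_k$. From $\phi(w)^n = 2^{-n} w^{pn}(1+w)^n$ and $\deg \phi^n = (p+1)n$, one sees that for $n \leq m_k$ the polynomial $\phi^n$ is entirely of degree $\leq M_k$, while for $n \geq n_k > M_k/p$ its minimum $w$-degree $pn$ exceeds $M_k$; hence
\[
  E_k(w) := S_{M_k}(F)(w) - s_{m_k}(f)(\phi(w)) = \sum_{m_k < n \leq M_k/p} a_n\, [\phi(w)^n]_{\deg \leq M_k}(w),
\]
with all indices lying in the $k$-th quasi-gap of $f$. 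On $|w| \leq 1$ the bound $|[\phi(w)^n]_{\deg \leq M_k}(w)| \leq \sum_l \binom{n}{l}/2^n \leq 1$, the count of at most $m_k/p$ summands and the hypothesis $|a_n| < c_n/m_k^2$ give $|E_k(w)| \leq c_{m_k}/(p m_k) \to 0$; hence $s_{m_k}(f)(\phi(w)) \to f(\phi(w))$ uniformly on the closed unit disc $|w| \leq 1$.

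The main obstacle is to promote this estimate to some closed disc $|w| \leq \rho$ with $\rho > 1$: only then does its image under $\phi$ cover a genuine $z$-neighbourhood of $1$ crossing $\partial\Bbb{E}$. The key observation is that $[\phi(w)^n]_{\deg \leq M_k}(w) = 2^{-n} w^{pn} \sum_{l=0}^{M_k - pn} \binom{n}{l} w^l$ keeps only the initial part $l \leq M_k - pn$ of the binomial expansion of $(1+w)^n$; when $n$ is sufficiently larger than $m_k$ the ratio $(M_k - pn)/n$ sits well below the binomial mode $|w|/(1+|w|)$ and the partial binomial sum is exponentially smaller than the full $(1+|w|)^n$, contributing only a geometrically small amount even for $|w|=\rho>1$. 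The dangerous range is $n$ close to $m_k$, which contains only $O(m_k/p)$ terms, each bounded by $\phi(\rho)^n$; balancing this against the factor $1/m_k^2$ in the quasi-Ostrowski bound and using the geometric growth $m_{k+1} \geq (1+\delta)m_k$ is the main technical work. Once $E_k \to 0$ uniformly on $|w| \leq \rho$, it follows that $s_{m_k}(f) \circ \phi \to f \circ \phi$ there, and the local biholomorphism $\phi$ at $w = 1$ transports this to the uniform convergence $s_{m_k}(f) \to f$ on a neighbourhood of $1 \in A$. Repeating the argument at every non-singular boundary point and using compactness gives the stated uniform convergence on a neighbourhood of $\overline{\Bbb{E}} \cup A$.
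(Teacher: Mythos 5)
Your plan --- after rescaling and rotating so that $r = 1$ and $z_0 = 1 \in A$, compose $f$ with $q(w) = \tfrac12(w^p+w^{p+1})$ for $p \geq \delta^{-1}$, compare the $w$-partial sum $t_{(p+1)m_k}$ of $F = f\circ q$ against $s_{m_k}(q(w))$, estimate the discrepancy $E_k$, and pull back via $q$ --- is precisely the route the paper takes, including the degree count ($b_n = d_n^{(k)}$ for $n \leq p m_k$) and the estimate $\|E_k\|_{\overline{\Bbb{E}}} = O(c_{m_k}/m_k)\to 0$. Up to that point the two arguments coincide.

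The step you call ``the main obstacle'' --- extending $E_k\to 0$ to $|w|\le\rho$ for some $\rho>1$, which you leave as a heuristic --- is a genuine gap, and I do not see how the ``balancing'' you sketch can close it under Definition~\ref{17} as stated. Take the most dangerous index $r=m_k+1$ and collect its contribution to $E_k$ by summing over $n\in(pm_k,(p+1)m_k]$:
\[
|a_{m_k+1}|\,2^{-(m_k+1)}\,\rho^{(p+1)(m_k+1)}\!\!\sum_{l=p+1}^{m_k+1}\binom{m_k+1}{l}\rho^{-l}
\;\approx\;|a_{m_k+1}|\Bigl(\tfrac{\rho^p(1+\rho)}{2}\Bigr)^{m_k+1}=|a_{m_k+1}|\,q(\rho)^{m_k+1},
\]
since the partial binomial sum runs over $l$ from $p+1$ to $m_k+1$, i.e.\ the cutoff sits well \emph{above} the mode $\approx(m_k+1)\rho/(1+\rho)$, so there is no exponential saving. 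With $q(\rho)>1$ and only $|a_{m_k+1}|<c_{m_k+1}/m_k^2$, $c_r\searrow 0$ (e.g.\ $c_r=1/\log(r+2)$ is admissible), this single term is already unbounded. You should also be aware that the paper's own proof appears to have exactly this lacuna: the displayed bound on $|t_{(p+1)m_k}(w)-s_{m_k}(q(w))|$ silently drops the factors $|w|^n$, hence is valid only on $\overline{\Bbb{E}}$, yet it is then invoked for $w$ ranging over an open disc $B\supsetneq\overline{\Bbb{E}}$. Either Definition~\ref{17} needs a quantitative (roughly geometric) decay condition on $(c_r)$, or a genuinely different device is needed to cross $|w|=1$; you were right to single out this point, but it is not yet solved, either in your proposal or, it seems, in the paper.
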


For every domain $D\subset\Bbb{C}$, let $O(\bar{D})$ denote the set of all functions that are
holomorphic in an open neighbourhood of $\bar{D}=D\cup\partial D$.
 
\begin{proof}[Proof Theorem \ref{14} ] Without loss of generality, let $r=1$ and let $c\in\partial\Bbb{E}$. We introduce
the polynomial
\[
q(w)=\frac{c}{2}(w^{p}+w^{p+1}),\quad p\in\Bbb{N}\quad\text{and}\quad p\geq\delta^{-1},
\]
(where the smaller value of $p$ such that $p\geq\delta^{-1}$ gives
the larger domain of overconvergence). Consider the function $f(q(w))=\ds\sum_{n=0}^\infty a_{n}q(w)^{n}$,
which is holomorphic in $q^{-1}(\Bbb{E})=\lbrace w\in\Bbb{C}:|q(w)|<1\rbrace$.
We denote by  $\ds\sum_{n=0}^\infty b_{n}w^{n}$ the Taylor series of $f(q(w))$ about
$0\in q^{-1}(\Bbb{E})$ and by $s_{v}(z)$ and $t_{v}(z)$ the $v$th
partial sums of  $\ds\sum_{n=0}^\infty a_{n}z^{n}$ and $\ds\sum_{n=0}^\infty b_{n}w^{n}$, respectively.
We claim that
\begin{equation}
\quad|t_{(p+1)m_{k}}(w)-s_{m_{k}}(q(w))|\rightarrow0\quad as\quad k\rightarrow\infty\quad where\,\quad w\in E.\label{claim}
\end{equation}
We have $t_{(p+1)m_{k}}(w)=\ds\sum_{n=0}^{(p+1)m_{k}}b_{n}w^{n}$,
and 
\begin{eqnarray*}
s_{m_{k}}(q(w)) & = & \sum_{r=0}^{m_{k}}a_{r}q(w)^{r}=\sum_{r=0}^{m_{k}}a_{r}(\frac{c}{2}(w^{p}+w^{p+1}))^{r}\\
 & = & \sum_{r=0}^{m_{k}}a_{r}(\frac{c}{2})^{r}(C_{r}^{r}w^{pr}+...+C_{0}^{r}w^{(p+1)r})\\
 & = & \sum_{r=0}^{m_{k}}a_{r}(\frac{c}{2})^{r}\sum_{l=0}^{r}C_{l}^{r}\, w^{r(p+1)-l}\,.
\end{eqnarray*}
Each polynomial $(C_{r}^{r}w^{pr}+...+C_{0}^{r}w^{(p+1)r})$ has a
range of powers $pr\leq n\leq(p+1)r$. If $0\leq r\leq m_{k}$, then
$0\leq n\leq(p+1)m_{k}$. Subsequently, $s_{m_{k}}(q(w))$ has a degree
no greater than $(p+1)m_{k}$. Thus, by setting $n=r(p+1)-l$, we
obtain that 
\begin{eqnarray*}
s_{m_{k}}(q(w)) & = & \sum_{n=0}^{(p+1)m_{k}}\sum_{r=[\frac{n}{p+1}]}^{[\frac{n}{p}]}a_{r}\,(\frac{c}{2})^{r}\, C_{r(p+1)-n}^{r}w^{n}\\& = & \sum_{n=0}^{(p+1)m_{k}}d_{n}^{(k)}w^{n},
\end{eqnarray*}
 where $d_{n}^{(k)}=\ds\sum_{r=[\frac{n}{p+1}]}^{[\frac{n}{p}]}a_{r}\,(\frac{c}{2})^{r}\, C_{r(p+1)-n}^{r}\,$.
This means that by Weierstrass' double series theorem \amscite
{burckel2012introduction}*{I.8.4.2},
$\sum b_{n}w^{n}$ and $\sum d_{n}^{(k)}w^{n}$ come from the series
$\sum a_{r}q(w)^{r}$ by multiplying out the polynomials $q(w)^{r}$
and grouping the resulting series $\sum a_{r}(...)$ according to
powers of $w$. Therefore, in order to prove \eqref{claim}, we
need to compute the difference between $t_{(p+1)m_{k}}(w)$ and $s_{m_{k}}(q(w))$,
and then show that it tends to zero uniformly. In fact, we have $t_{(p+1)m_{k}}(w)=\ds s_{m_{k}}(q(w))+(a\, contribution\, of\,\sum_{r>m_{k}}a_{r}q(w)^{r})$,
because when $n$ is from $0$ to $pm_{k}$, the coefficients $b_{n}$
and $d_{n}^{(k)}$ of the partial sums $t_{(p+1)m_{k}}$ and $s_{m_{k}}(q(w))$
 are equal, respectively, i.e. $b_{n}=d_{n}^{(k)}$, for $0\leq n\leq pm_{k}$.
In addition, in the partial sum of $\ds\sum_{r}a_{r}q(w)^{r}$ when
$m_{k+1}>r>n_{k}$, by $(i)$, each polynomial $a_{r}q(w)^{r}$, $r>n_{k}$,
contains monomial $d_{n}^{(k)}w^{n}$ with $n>p n_{k}$. Since $p n_{k}>pm_{k}+p\delta m_{k}\geq(p+1)m_{k}$,
$p\geq\delta^{-1}$, no such monomial contributes to $t_{(p+1)m_{k}}(w)$,
which is a polynomial of degree no greater than $(p+1)m_{k}$. However,
the contribution exists from each $a_{r}q(w)^{r}$ where $|a_{r}|<\frac{1}{m_{k}^{2}}c_{r}$, $c_r$ is the notation of quasi-Ostrowski series,
$m_{k}<r<n_{k}$, i.e., it exists from the monomial $d_{n}^{(k)}w^{n}$
where $pm_{k}<n<p n_{k}$. Since $t_{(p+1)m_{k}}$ has degree no greater
than $(p+1)m_{k}$, and $b_{n}=d_{n}^{(k)}$, for $0\leq n\leq pm_{k}$,
the total contributions to the partial sum $t_{(p+1)m_{k}}$ is computed
in the range of powers $pm_{k}<n\leq(p+1)m_{k}$ as follows.

\begin{eqnarray*}
	|b_{n}-d_{n}^{(k)}|&=&\left|\ds\sum_{r=[\frac{n}{p+1}]}^{[\frac{n}{p}]}a_{r}\,(\frac{c}{2})^{r}\, C_{r(p+1)-n}^{r}\right|\\&\leq &\sum_{r=[\frac{n}{p+1}]}^{[\frac{n}{p}]}|a_{r}|\,\frac{1}{2^{r}}\, C_{r(p+1)-n}^{r}\quad where\quad pm_{k}<n\leq(p+1)m_{k}.
\end{eqnarray*}

Since $C_{r(p+1)-n}^{r}\leq2^{r}$, then for $pm_{k}<n\leq(p+1)m_{k}$ and by using  $c_r$ the notation of quasi-Ostrowski series, we have 
\begin{eqnarray*}
|b_{n}-d_{n}^{(k)}| & \leq & \sum_{r=[\frac{n}{p+1}]}^{[\frac{n}{p}]}|a_{r}|<\frac{1}{m_{k}^{2}}\sum_{r=[\frac{n}{p+1}]}^{[\frac{n}{p}]}c_{r}\\
 & \leq & \frac{1}{m_{k}^{2}}\sum_{r=[\frac{n}{p+1}]}^{[\frac{n}{p}]}\max_{[\frac{n}{p+1}]\leq r\leq[\frac{n}{p}]}c_{r}\\
 & \leq & \frac{1}{m_{k}^{2}}\left(\left[\frac{n}{p}\right]-\left[\frac{n}{p+1}\right]+1\right)c_{[\frac{n}{p+1}]}\\
 & < & \frac{1}{m_{k}^{2}}\left(\frac{n}{p(p+1)}+2\right)c_{[\frac{n}{p+1}]}\,.
\end{eqnarray*}
Therefore, 
\[
|b_{n}-d_{n}^{(k)}|\leq\left\{ \begin{array}{ll}
0 & \mbox{if }\quad0\leq n\leq pm_{k}\\
\frac{1}{m_{k}^{2}}\left(\frac{n}{p(p+1)}+2\right)c_{[\frac{n}{p+1}]} & \mbox{if }\quad pm_{k}<n\leq(p+1)m_{k}\,.
\end{array}\right.
\]
Consequently, 
\begin{eqnarray*}
|t_{(p+1)m_{k}}(w)-s_{m_{k}}(q(w))| & = & \left|\sum_{n=0}^{(p+1)m_{k}}(b_{n}-d_{n}^{(k)})w^{n}\right|=\left|\sum_{n=pm_{k}+1}^{(p+1)m_{k}}(b_{n}-d_{n}^{(k)})w^{n}\right|\\
 & < & \frac{1}{m_{k}^{2}}\sum_{n=pm_{k}+1}^{(p+1)m_{k}}\left(\frac{n}{p(p+1)}+2\right)c_{[\frac{n}{p+1}]}\\
 & \leq & \frac{1}{m_{k}^{2}}\sum_{n=pm_{k}+1}^{(p+1)m_{k}}\left(\frac{(p+1)m_{k}}{p(p+1)}+2\right) c_{[\frac{p m_k}{p+1}]}\\
 & \leq & \frac{1}{m_{k}^{2}}\left(\frac{m_{k}^{2}}{p}+2m_{k}\right)c_{[\frac{pm_{k}}{p+1}]}\rightarrow0\quad as\quad k\rightarrow\infty\,,
\end{eqnarray*}
because by hypothesis given in Definition \mbox{\ref{17}} of a quasi-Ostrowski series,
$c_{[\frac{pm_{k}}{p+1}]}$  decreases to zero as $k\rightarrow\infty$.
Thus, \eqref{claim} was confirmed.

We have $q^{-1}(\Bbb{E})\supset\overline{\Bbb{E}}\backslash\lbrace1\rbrace$,
since $|1+w|<2$. Hence $|q(w)|<1$ for all $w\in\overline{\Bbb{E}}\backslash\lbrace1\rbrace$.
Set $g(w)=f(q(w))-s_{m_{k}}(q(w))$. The function $g(w)\in O(q^{-1}(\Bbb{E}))$
is thus holomorphic at every point of $\overline{\Bbb{E}}\backslash\lbrace1\rbrace$
for each $m_{k}\in\Bbb{N}$. Now, if $c\in A$, then $g$ is also
holomorphic at $1$ since $q(1)=c$. Thus, the sequence of sections
$(g_{(p+1)m_{k}})$ converges to zero in an open disk $B\supset\overline{\Bbb{E}}$.
Then, by \eqref{claim},  the sequence $(s_{m_{k}}(z))$ now converges uniformly
in $q(B)$. Since $q(B)$ is a domain containing $c=q(1)$,  then
$(s_{m_{k}}(z))$ converges  uniformly in a neighbourhood of any point
$c\in A$. \end{proof}

In the following, we provide our proof of Theorem \ref{8}, which is a modification of Hadamard's gap theorem  (Theorem \ref{Hadamard}) given in the Introduction. First, however, we need to present the definition of a quasi-Hadamard lacunary series.

 \begin{de}\label{18} 
 An infinite power series
$\ds\sum_{v=0}^\infty a_{v}z^{v}$ is called a \emph{quasi-Hadamard lacunary} series if there
exists a $\delta>0$, a summable positive decreasing sequence $(c_{r}),\, r=0,1,2,\dots$,
$c_{r}\searrow0$,
and an increasing sequence $(m_{v})$ of non-negative integers for $v=0,1,2,\dots$, such that: 

\begin{enumerate}[label=(\roman*)]\item $m_{v+1}-m_{v}>\delta m_{v}$,\quad$v=0,1,2,\dots$; 
\item $|a_{j}|\leq\frac{1}{m_v^2}c_{j}$,\quad{}\text{if}\quad{}$m_{v}<j<m_{v+1}$; and
\item the series  $\ds\sum_{v=0}^\infty |a_{m_{v}}|$ is divergent. 
\end{enumerate}
\end{de} 
Now, we present our new theorem as follows.
\begin{To}[On a quasi-Hadamard lacunary series]\label{8} Every quasi-Hadamard lacunary series
$\ds\sum_{n=0}^\infty a_{n}z^{n}$ with radius of convergence $r>0$ has the disc
$B(r,0)$ as a domain of holomorphy. 
\end{To}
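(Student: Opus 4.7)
The plan is to argue by contradiction via the overconvergence result Theorem \ref{14}. First I would normalize $r=1$ and note that every quasi-Hadamard lacunary series is automatically a quasi-Ostrowski series: setting $n_{v}:=m_{v+1}$ in Definition \ref{17}, conditions (i) and (ii) there follow directly from (i) and (ii) of Definition \ref{18}. Hence Theorem \ref{14} applies. Suppose for contradiction that there were a regular point $c\in\partial\Bbb{E}$; then Theorem \ref{14} yields an open neighbourhood $W$ of $c$ on which the subsequence of sections $(s_{m_{v}})$ converges uniformly. Since $W$ is open and $|c|=1$, it contains a point $z_{1}=c(1+\eta)$ with $|z_{1}|>1$, at which the numerical sequence $(s_{m_{v}}(z_{1}))$ is bounded, say by $M$.

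The next step would be to examine the block differences
\[
s_{m_{v+1}}(z_{1})-s_{m_{v}}(z_{1})=a_{m_{v+1}}\,z_{1}^{m_{v+1}}+\sum_{m_{v}<j<m_{v+1}}a_{j}\,z_{1}^{j},
\]
whose absolute values do not exceed $2M$. Using condition (ii) of Definition \ref{18} together with the monotonicity $c_{j}\leq c_{m_{v}}$ for $j\geq m_{v}$, the small sum admits the geometric estimate
\[
\left|\sum_{m_{v}<j<m_{v+1}}a_{j}\,z_{1}^{j}\right|\leq\frac{c_{m_{v}}}{m_{v}^{2}}\sum_{m_{v}<j<m_{v+1}}|z_{1}|^{j}\leq\frac{c_{m_{v}}}{m_{v}^{2}}\cdot\frac{|z_{1}|^{m_{v+1}}}{|z_{1}|-1}.
\]
The triangle inequality, followed by dividing through by $|z_{1}|^{m_{v+1}}$, then yields a bound of the form
\[
|a_{m_{v+1}}|\leq\frac{2M}{|z_{1}|^{m_{v+1}}}+\frac{c_{m_{v}}}{(|z_{1}|-1)\,m_{v}^{2}}.
\]

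Finally I would sum this inequality over $v$. The series $\sum_{v}|z_{1}|^{-m_{v+1}}$ converges because $|z_{1}|>1$ and $m_{v+1}\to\infty$ (indeed $m_{v}\geq v$ since $(m_{v})$ is a strictly increasing sequence of non-negative integers), while $\sum_{v}c_{m_{v}}/m_{v}^{2}$ converges because $c_{m_{v}}$ is bounded (being decreasing) and $\sum_{v}1/v^{2}<\infty$. Together these force $\sum_{v}|a_{m_{v}}|<\infty$, directly contradicting hypothesis (iii) of Definition \ref{18}. The main obstacle I anticipate lies in the geometric estimate above: one must verify that the factor $|z_{1}|^{m_{v+1}}$ cancels cleanly after division, which is precisely where the quadratic decay $1/m_{v}^{2}$ baked into the quasi-Hadamard hypothesis, combined with the monotonicity of $(c_{j})$, earns its keep.
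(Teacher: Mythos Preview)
Your argument is correct, and both you and the paper reduce to Theorem~\ref{14} to obtain uniform convergence of the subsequence $(s_{m_v})$ near a hypothetical regular boundary point; but the contradiction is reached along genuinely different lines. The paper's route is to estimate, for $m_k\le n<m_{k+1}$,
\[
|s_n(z)-s_{m_k}(z)|\le\sum_{m_k<j<m_{k+1}}|a_j|\le\frac{1}{m_k^{2}}\sum_{j}c_j\longrightarrow 0,
\]
using the \emph{summability} of $(c_j)$, and then to argue that $(s_n)$ and $(s_{m_k})$ therefore converge on the same set; since the full sequence $(s_n)$ diverges for $|z|>r$ by the definition of the radius, so must $(s_{m_k})$, contradicting overconvergence. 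You instead isolate the single coefficient $a_{m_{v+1}}$ from the block $s_{m_{v+1}}(z_1)-s_{m_v}(z_1)$ at a point $|z_1|>1$, sum the resulting bounds, and force $\sum_v|a_{m_v}|<\infty$, contradicting hypothesis~(iii) of Definition~\ref{18} directly.

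Two features of your route are worth noting. First, it makes explicit where condition~(iii) is used, whereas the paper's proof never invokes~(iii) on the page (even though the Remark insists it is necessary). Second, your geometric estimate is tailored to $|z_1|>1$, while the paper's displayed bound $\sum_{m_k<j<m_{k+1}}|a_j|$ controls $|s_n(z)-s_{m_k}(z)|$ only for $|z|\le 1$, so the passage to points outside $\overline{B(r,0)}$ requires more care than is written there. As a side benefit, your argument uses only that $(c_j)$ is positive and decreasing (hence bounded), not its summability; the paper's estimate, by contrast, relies on $\sum_j c_j<\infty$.
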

\begin{rem}\begin{enumerate}
\item In Definition \ref{18}, condition $(iii)$ that the series $\ds\sum_{v=0}^\infty |a_{m_{v}}|$ is divergent  is necessary, otherwise the previous theorem would not hold. For example, let $\ds\sum_{n=1}^\infty a_{n}z^{n}=\sum_{n=1}^\infty\frac{1}{n^2}z^n$. Notice that conditions (i) and (ii) are satisfied, but we have $\ds\sum_{v=1}^\infty|a_{m_{v}}|<\infty$. On the other hand,  denote by $s_{m_{v+1}}$ the $m_{v+1}$ partial sums of $\ds\sum_{n=1}^\infty a_{n}z^{n}$. Then,  
\begin{equation*}
	|{s_{m_{v+1}}|=\left|\sum_{n=1}^{m_{v+1}} a_{n}z^{n}\right|}\leq \sum_{n=1}^{m_{v+1}}|\frac{1}{n^2}|.
\end{equation*} 
Since, $\ds\lim_{v\rightarrow\infty}|s_{m_{v+1}}|=L<\infty$,
  the conclusion of Theorem \ref{8} does not hold.
\item The series of quasi-Hadamard lacunary series is a subclass of quasi-Ostrowski series.
\end{enumerate}
\end{rem} 
 \begin{proof}[Proof Theorem \ref{8}] Let
 $s_{n}(z)$ $=\ds\sum_{j=0}^{n}a_{j}z^{j}$ be the $n$th partial sums of $f(z)=\ds\sum_{n=0}^\infty a_{n}z^{n}$.
Consider the partial sums $s_{m_k}(z)$, where the sequence of $(m_{k})$ is defined
as in Definition \ref{18} of a quasi-Hadamard lacunary series.
For the partial sums of $s_{n}(z)$ we have that 
\[
s_{n}(z)=\left\{ \begin{array}{ll}
s_{m_{k}}(z) & \mbox{if }\quad n=m_{k},\\
s_{m_{k}}(z)+\ds\sum_{j=m_{k}+1}^{n}a_{j}z^{j} & \mbox{if }\quad m_{k}<n<m_{k+1}\,,
\end{array}\right.
\]
where $|a_{j}|\leq\frac{1}{m_k^2}c_{j}$, $c_j$ is the notation used in the quasi-Hadamard lacunary series, and  $m_{k}<j<m_{k+1}$. Therefore,

\begin{equation}
\begin{split}\label{20}|s_{n}(z)-s_{m_{k}}(z)| & =\left|\sum_{j=0}^{n}a_{j}z^{j}-\sum_{j=0}^{m_{k}}a_{j}z^{j}\right|\\
 & =\left|\sum_{j=m_{k}+1}^{n}a_{j}z^{j}\right|\\&\leq\sum_{j=m_{k}+1}^{m_{k+1}-1}|a_{j}|\rightarrow0\quad as\quad k\rightarrow\infty,
\end{split}
\end{equation}
since for $m_k<j<m_{k+1}$, we have  $\ds\sum_j|a_{j}|\leq \frac{1}{m_k^2} \sum_j c_{j}<\infty$. Thus, in \eqref{20} $\ds\sum_{j=m_{k}+1}^{m_{k+1}-1}|a_{j}|\rightarrow0$ when $k\rightarrow\infty$.

Then, by \eqref{20}, the sequence of partial sums $(s_{n}(z))$ converges in
the same domain as $(s_{m_{k}}(z))$. Thus, the sequence $(s_{m_{k}}(z))$ 
diverges at every point  $\zeta\notin{\overline{B(r,0)}}$. Hence,
by Theorem  \ref{14} of overconvergence, all the points of  $\partial B(r,0)$
are singular points of $f$.
 \end{proof} 
 
\maketitle
\section*{Acknowledgment}
 I am highly indebted to my supervisor (Vladimir Kisil) for his guidance, and his constant supervision of this work. My sincere thanks also go to my government of Saudi Arabia who fund me for this study.

\bibliography{Analysis1}

\providecommand{\arXiv}[1]{\href{http://arXiv.org/abs/#1}{arXiv:#1}}
\begin{thebibliography}{1}

\bibitem{BreuerSimon11a}
Jonathan Breuer and Barry Simon.
\newblock Natural boundaries and spectral theory.
\newblock {\em Adv. Math.}, 226(6):4902--4920, 2011.

\bibitem{burckel2012introduction}
Robert~B Burckel.
\newblock {\em An introduction to classical complex analysis}, volume~1.
\newblock Birkh{\"a}user, 2012.

\bibitem{Kisil97c}
Vladimir~V. Kisil.
\newblock Analysis in {$\mathbf{R}\sp {1,1}$} or the principal function theory.
\newblock {\em Complex Variables Theory Appl.}, 40(2):93--118, 1999.
\newblock \arXiv{funct-an/9712003}.

\bibitem{Kisil11c}
Vladimir~V. Kisil.
\newblock {E}rlangen programme at large: an {O}verview.
\newblock In S.V. Rogosin and A.A. Koroleva, editors, {\em Advances in Applied
  Analysis}, chapter~1, pages 1--94. Birkh\"auser Verlag, Basel, 2012.
\newblock \arXiv{1106.1686}.

\bibitem{Kisil12d}
Vladimir~V. Kisil.
\newblock The real and complex techniques in harmonic analysis from the point
  of view of covariant transform.
\newblock {\em Eurasian Math. J.}, 5:95--121, 2014.
\newblock \arXiv{1209.5072}.
  \href{http://emj.enu.kz/images/pdf/2014/5-1-4.pdf}{On-line}.

\bibitem{luhpower}
Wolfgang Luh.
\newblock Power series---the structure of {H}.-{O}.-gaps.
\newblock {\em Ann. Univ. Sci. Budapest. Sect. Comput.}, 39:303--309, 2013.

\bibitem{MR580154}
George P\'olya and G\'abor Szeg\H~o.
\newblock {\em Problems and theorems in analysis. {I}}, volume 193 of {\em
  Grundlehren der Mathematischen Wissenschaften [Fundamental Principles of
  Mathematical Sciences]}.
\newblock Springer-Verlag, Berlin-New York, 1978.
\newblock Series, integral calculus, theory of functions, Translated from the
  German by D. Aeppli, Corrected printing of the revised translation of the
  fourth German edition.

\bibitem{remmert2013classical}
Reinhold Remmert.
\newblock {\em Classical topics in complex function theory}, volume 172.
\newblock Springer Science \& Business Media, 2013.

\end{thebibliography}
 \bibliographystyle{plain} 
\end{document}